\documentclass[12pt]{article}
\usepackage{amsmath,amsthm,amscd,amssymb}
\usepackage[mathscr]{eucal}
\usepackage{amssymb}
\usepackage{latexsym}

\textwidth=150mm
\oddsidemargin=5mm
\evensidemargin=5mm
\topmargin=0mm \textheight=220mm

\newtheorem{Def}{Definition}[section]
\newtheorem{Thm}[Def]{Theorem}
\newtheorem{Prop}[Def]{Proposition}
\newtheorem{Rem}[Def]{Remark}

\newtheorem{Lem}[Def]{Lemma}

\numberwithin{equation}{section}
\newcommand{\Q}{\mathbb{Q}}

\newcommand{\Z}{\mathbb{Z}}

\newcommand{\hh}{\mathbb{H}}

\newcommand{\mat}[4]{\begin{pmatrix} #1 & #2 \\ #3 & #4 \end{pmatrix}}
\newcommand{\smat}[4]{\left(\begin{smallmatrix} #1 & #2 \\ #3 & #4 \end{smallmatrix}\right)}

\begin{document}

\title{On $p$-adic Siegel--Eisenstein series II: How to avoid the regularity condition for $p$}
\author{Siegfried B\"ocherer and Toshiyuki Kikuta$^*$}
\maketitle

\noindent
{\bf 2020 Mathematics subject classification}: Primary 11F33 $\cdot$ Secondary 11F46\\
\noindent
{\bf Key words}: Siegel--Eisenstein series, genus theta series, $p$-adic Eisenstein series, mod $p^m$ singular. 

\begin{abstract}
In a previous paper, the authors showed that two kinds of $p$-adic Siegel--Eisenstein series of degree $n$ coincide with classical modular forms of weight $k$ for $\Gamma _0(p)$, 
under the assumption that $p$ is a regular prime.  
The purpose of this paper is to show that this condition on $p$ can be removed if 
the degree $n$ is low compared with $k$, namely, $n\le 2k+1$.
\end{abstract}

\section{Introduction}

Nagaoka \cite{Na} and Katsurada--Nagaoka \cite{Kat-Na} considered $p$-adic limits of Siegel
Eisenstein series for arbitrary degree $n$, starting from ``initial weight'' $k$
being $1$ and $2$ respectively. 
They showed that these limits become classical Siegel modular forms for $\Gamma_0(p)$,
more precisely they were identified with certain linear combinations of genus theta series.
It was later noticed \cite{Kat-Nacorr} that in their work a
condition ``$p$ regular'' had to be imposed if $n$ is large, but the result remained valid unconditionally
for small $n$.


Using a different technique (mod $p^m$ singular modular
forms and their structure) we recently proved such a
result for arbitrary $k$ for regular primes $p$. 
We emphasize that this condition on $p$ was needed for all $n$ (including the case of small $n$). 
The purpose of the present note is to show that by using some
ingredients from \cite{Bo-Ki1,Bo-Ki2} (but not the full strength of the structure theorem there)
we can also get rid of the $p$-regularity condition if $n$ is not too large compared with $k$ (i.e., $n\le 2k+1$).

\section{Preliminaries}
\label{Sec:2}
\subsection{Siegel modular forms}
\label{sec:siegel-modular-forms}
Let $n$ be a positive integer,
$\hh_{n}$ the Siegel upper half space of degree $n$,
and $\Gamma _n$ the Siegel modular group of degree $n$. 
Let $N$ be a positive integer and $\Gamma _0^{(n)}(N)$ the congruence subgroup of $\Gamma _n$ defined as 
\begin{align*}
&\Gamma _0^{(n)}(N):=\left\{ \begin{pmatrix}A & B \\ C & D \end{pmatrix}\in \Gamma _n \: \Big| \: C\equiv 0_n \bmod{N} \right\}.
\end{align*}

For a positive integer $k$ and a Dirichlet character 
$\chi $ mod $N$, we denote by $M_k(\Gamma _0^{(n)}(N),\chi )$
the space of Siegel modular forms of weight $k$ with character $\chi$ for $\Gamma _0^{(n)}(N)$. 
When $\chi = {\boldsymbol 1}_N$ (trivial character mod $N$), 
we write simply $M_k(\Gamma _0^{(n)}(N))$ for $M_k(\Gamma _0^{(n)}(N),{\boldsymbol 1}_N)$.

Any $F \in M_k(\Gamma _0^{(n)}(N), \chi )$ has a Fourier expansion of the form
\[
F(Z)=\sum_{0\leq T\in\Lambda_n}a_F(T)q^T,\quad q^T:=e^{2\pi i {\rm tr}(TZ)},
\quad Z\in\mathbb{H}_n,
\]
where
\[
\Lambda_n
:=\{ T=(t_{ij})\in {\rm Sym}_n(\mathbb{Q})\;|\; t_{ii},\;2t_{ij}\in\mathbb{Z}\}.
\]

For a subring $R$ of $\mathbb{C}$, we denote by $M_{k}(\Gamma _0^{(n)}(N), \chi )_{R}$ 
the $R$-module consisting of all $F\in M_{k}(\Gamma _0^{(n)}(N), \chi )$ satisfying $a_F(T)\in R$ for all $T\in \Lambda _n$. 
We write simply $M_{k}(\Gamma _0^{(n)}(N))_{R}$ for  
$M_{k}(\Gamma _0^{(n)}(N), {\boldsymbol 1}_N)_{R}$. 
 
We denote by $\Lambda _n^+$ the set of all positive definite elements of $\Lambda _n$. 
Let $a_F(T)^*$ be the rank $r$ primitive Fourier coefficient for $T\in \Lambda _r^+$ ($r\le n$), 
as introduced in \cite{Bo-Ra}, it is characterized by the relation
\begin{align*}
a_F\begin{pmatrix}T & 0 \\ 0 & 0_{n-r} \end{pmatrix}
=\sum _{D}a_F\begin{pmatrix}T[D^{-1}] & 0 \\ 0 & 0_{n-r}\end{pmatrix}^*.  
\end{align*}
Here $D$ runs over all elements of ${\rm GL}_r(\Z)\backslash \{D\in \Z^{r,r}\; |\; \det D\neq 0\}$
satisfying $T[D^{-1}]\in \Lambda _r^+$. 
We define 
\[F_{[r]}:=\sum _{\substack{0\le T\in \Lambda _n \\ {\rm rank}(T)=r}} a_F(T)q^T. \]
The following property will be needed later.  
\begin{Lem}[\cite{Bo-Ki2} Lemma 4.1]
We have (as a formal identity of the Fourier expansions)
\label{lem:Fri}
\begin{align}
F_{[r]}=\sum _{S\in \Lambda_{r}^{+}/{\rm GL_r}(\Z)}\frac{a_F\left(\begin{smallmatrix}S & 0 \\ 0 & 0_{n-r}\end{smallmatrix}\right)^*}{\epsilon(S)}(\theta ^{(n)}_S)_{[r]},
\end{align}
where $S$ runs over representatives of all ${\rm GL_r}(\Z)$-equivalence classes in $\Lambda_{r}^{+}$, 
$\theta _S^{(n)}$ is the theta series of degree $n$ attached to $S$ defined as 
\begin{align*}
\theta _S^{(n)}(Z):=\sum _{X\in \Z^{r,n}}e^{2\pi i({\rm tr}(S[X]Z))}\quad (Z\in \hh_{n}) 
\end{align*}
($\Z^{r,n}$ is the set of  $r\times n$ matrices with integral components),    
and $\epsilon (S)$ is the cardinality of the group of automorphisms of $S$. 
\end{Lem}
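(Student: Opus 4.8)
The plan is to prove the identity by comparing, on the two sides, the Fourier coefficient of $q^{T}$ for each fixed positive semi-definite $T\in\Lambda_n$ of rank $r$.

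First I would rewrite the right-hand side. Since $S\in\Lambda_r^+$ is nondegenerate, for $X\in\Z^{r,n}$ the matrix $S[X]={}^tXSX$ has rank equal to $\rank(X)$; hence $(\theta^{(n)}_S)_{[r]}=\sum_{X\in\Z^{r,n},\,\rank(X)=r}q^{S[X]}$, and the coefficient of $q^{T}$ on the right-hand side equals $\sum_{[S]}\frac{a_F(\widetilde S)^*}{\epsilon(S)}\,r(S,T)$, where $\widetilde S=\smat{S}{0}{0}{0_{n-r}}$ and $r(S,T):=\#\{X\in\Z^{r,n}:S[X]=T\}$. Each of $a_F(\widetilde S)^*$, $\epsilon(S)$, $\theta_S^{(n)}$ and $r(S,T)$ depends only on the $\mathrm{GL}_r(\Z)$-class of $S$, so this sum is well defined. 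Both sides are also invariant under $T\mapsto T[U]$ for $U\in\mathrm{GL}_n(\Z)$ --- the left-hand side by the $\mathrm{GL}_n(\Z)$-invariance of Fourier coefficients of modular forms, the right-hand side because $X\mapsto XU$ permutes the full-rank matrices in $\Z^{r,n}$. The radical of any such $T$ is a primitive sublattice of $\Z^n$ of corank $r$, so after applying an element of $\mathrm{GL}_n(\Z)$ I may assume $T=\smat{T_1}{0}{0}{0_{n-r}}$ with $T_1\in\Lambda_r^+$.

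For such $T$, writing $X=(X_1\mid X_2)$ with $X_1\in\Z^{r,r}$, the equation $S[X]=T$ forces $S[X_2]=0$, hence $X_2=0$ by positive definiteness, and then $S[X_1]=T_1$ with $X_1$ necessarily nonsingular (compare determinants). Thus the $q^{T}$-coefficient of the right-hand side is $\sum_{S}\frac{a_F(\widetilde S)^*}{\epsilon(S)}\,\#\mathcal P_S$ with $\mathcal P_S:=\{X_1\in\Z^{r,r}:S[X_1]=T_1\}$ and $S$ running over $\mathrm{GL}_r(\Z)$-class representatives, whereas the $q^{T}$-coefficient of the left-hand side is $a_F(T)=a_F\smat{T_1}{0}{0}{0_{n-r}}$, which by the defining relation of the primitive coefficients equals $\sum_{D}a_F\smat{T_1[D^{-1}]}{0}{0}{0_{n-r}}^{*}$, the sum over $D$ in $\mathrm{GL}_r(\Z)\backslash\mathcal D$, where $\mathcal D:=\{D\in\Z^{r,r}:\det D\neq 0,\ T_1[D^{-1}]\in\Lambda_r^+\}$. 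It then remains to match these two expressions. Consider $\mathcal P:=\{(S,X_1):S\in\Lambda_r^+,\ X_1\in\Z^{r,r},\ S[X_1]=T_1\}$ with the $\mathrm{GL}_r(\Z)$-action $U\cdot(S,X_1)=(S[U^{-1}],UX_1)$; this action is free since $X_1$ is invertible, and the fiber of $\mathcal P$ over a representative $S$ is $\mathcal P_S$. The maps $D\mapsto(T_1[D^{-1}],D)$ and $(S,X_1)\mapsto X_1$ are mutually inverse $\mathrm{GL}_r(\Z)$-equivariant bijections between $\mathcal D$ and $\mathcal P$, so $\mathrm{GL}_r(\Z)\backslash\mathcal D\cong\mathrm{GL}_r(\Z)\backslash\mathcal P=\bigsqcup_{S}\big(\mathrm{Aut}(S)\backslash\mathcal P_S\big)$ and $\#\mathcal P_S=\epsilon(S)\cdot\#\big(\mathrm{Aut}(S)\backslash\mathcal P_S\big)$ by freeness. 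Since $a_F(\cdot)^{*}$ is a $\mathrm{GL}_r(\Z)$-class invariant, the term attached to $D$ is $a_F(\widetilde S)^{*}$ when $D$ corresponds to $(S,X_1)$; summing over $\mathrm{GL}_r(\Z)\backslash\mathcal D$ through the bijection yields $a_F(T)=\sum_{S}\frac{a_F(\widetilde S)^{*}}{\epsilon(S)}\#\mathcal P_S$, the asserted equality of $q^{T}$-coefficients.

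I expect the only delicate point to be the automorphism bookkeeping in the last step: one must verify that the $\mathrm{GL}_r(\Z)$-action on $\mathcal D$ (equivalently on $\mathcal P$) is free and that the bijection $\mathcal D\leftrightarrow\mathcal P$ is genuinely equivariant, so that the factor $1/\epsilon(S)$ coming from the automorphisms in the theta expansion is exactly compensated by the multiplicity with which each sublattice of $(\Z^r,T_1)$ in the class of $S$ is represented by nonsingular integral matrices. The reductions in the first two paragraphs, and the $\mathrm{GL}$-invariance of $a_F(\cdot)$ and $a_F(\cdot)^{*}$, are routine.
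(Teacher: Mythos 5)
The paper states this lemma purely as a quotation of \cite{Bo-Ki2} Lemma 4.1 and gives no proof of its own, so there is nothing internal to compare against; your argument is correct and complete. It is the natural proof one would expect the cited source to use: reduce to block-diagonal $T$ by ${\rm GL}_n(\Z)$-invariance of the coefficients, compute the $q^T$-coefficient of the right-hand side as a weighted representation number, and match it against the defining relation of the primitive coefficients via the free ${\rm GL}_r(\Z)$-action on the pairs $(S,X_1)$, which is exactly where the factor $1/\epsilon(S)$ is absorbed.
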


\subsection{Congruences for modular forms}
Let $p$ be an odd prime. 
Let $F_i$ ($i=1$, $2$) be two formal power series of the form
\[F_i=\sum _{T\in \Lambda _{n}}a_{F_i}(T)q^T\]
with $a_{F_i}(T)\in \Z_{(p)}$ for all $T\in \Lambda _n$. 
We write $F_1 \equiv F_2$ mod $p^m$ if $a_{F_1}(T)\equiv a_{F_2}(T)$ mod $p^m$ for all $T \in \Lambda _n$.  

\begin{Def}
We say that $F\in M_k(\Gamma _0^{(n)}(N),\chi )_{\Z_{(p)}}$ is ``mod $p^m$ singular'' if there exists $r$ satisfying the following properties: 
\begin{itemize}  \setlength{\itemsep}{-3pt}
\item
$a_F(T)\equiv 0$ mod $p^m$ for any $T\in \Lambda _{n}$ with ${\rm rank}(T)>r$, 
\item
there exists $T\in \Lambda _n$ with ${\rm rank}(T)=r$ such that $a_F(T)\not \equiv 0$ mod $p$. 
\end{itemize} 
We call such $r$ the ``$p$-rank'' of $F$. 
\end{Def}

\begin{Thm}[\cite{Bo-Ki3}]
\label{thm:wt-sing}
Let $p$ be an odd prime and $k$ a positive integer.  
Let $F\in M_k(\Gamma _0^{(n)}(N),\chi )_{\Z_{(p)}}$ with 
$\chi $ a quadratic Dirichlet character mod $N$. 
Suppose that $F$ is mod $p^m$ singular of $p$-rank $r$. 
Then we have $2k-r\equiv 0$ mod $(p-1)p^{m-1}$.
In particular, $r$ should be even.      
\end{Thm}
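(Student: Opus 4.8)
The statement is a congruence for the weight of a mod $p^m$ singular form, and the natural strategy is to reduce it to the rank-$r$ primitive Fourier coefficients and exploit Lemma~\ref{lem:Fri}. First I would use the definition of mod $p^m$ singular together with the characterization of $a_F(T)^*$: since $a_F(T)\equiv 0\bmod p^m$ for all $T$ of rank $>r$, the Möbius-type inversion defining the primitive coefficients shows that $a_F(S)^*\equiv 0\bmod p^m$ for all $S\in\Lambda_s^+$ with $s>r$, while for $s=r$ one has $a_F(S)^*\equiv a_F\!\left(\begin{smallmatrix}S&0\\0&0\end{smallmatrix}\right)\bmod p^m$ (the correction terms all have higher rank), so in particular some $a_F(S)^*$ with $S\in\Lambda_r^+$ is a $p$-adic unit. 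Consequently $F\equiv F_{[0]}+F_{[1]}+\cdots+F_{[r]}\bmod p^m$, and by Lemma~\ref{lem:Fri} each $F_{[s]}$ for $s\le r$ is, mod $p^m$, a $\Z_{(p)}$-combination of restrictions $(\theta^{(n)}_S)_{[s]}$ of theta series attached to $S\in\Lambda_s^+$.

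\medskip

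The key step is then to run a Hecke/Serre-type argument on these theta series to force the weight congruence. The point is that $\theta^{(n)}_S$ is a modular form of weight $s/2$ (with a quadratic character) when $S$ has rank $s$, and comparing it with $F$ of weight $k$ modulo $p$ should produce, via the action of a suitable operator that multiplies Fourier coefficients by something like $\det$-powers or via the diamond/Frobenius operators on the one-variable specialization, a relation of the form $2k\equiv s\bmod (p-1)$ at the first level, refined to $2k-r\equiv 0\bmod (p-1)p^{m-1}$ by iterating mod $p^m$. Concretely I would specialize $Z$ to a block-diagonal (or even $1\times 1$) variable to land in a space of elliptic or lower-degree modular forms where the classical theory of $\Theta$-cycles and the $U_p$, $V_p$ operators is available, isolate the top-rank part using that some primitive coefficient is a unit, and apply the standard fact that if two such forms are congruent mod $p^m$ their weights are congruent mod $(p-1)p^{m-1}$. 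The reduction "$r$ even" is then immediate since the theta series of odd rank have half-integral weight and their associated characters force $r$ even in order to match an integral-weight form $F$; alternatively it follows from $2k-r\equiv 0\bmod(p-1)$ with $p-1$ even.

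\medskip

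**Main obstacle.** The delicate part is not the bookkeeping with primitive coefficients but making the congruence \emph{sharp} modulo $p^m$ rather than just modulo $p$: one must ensure that the mod $p^m$ singular hypothesis, which only controls coefficients of rank $>r$, genuinely propagates to a mod $p^m$ statement about the weight. This is where one expects to need the structure of the relevant space of forms modulo $p^m$ — presumably the argument uses a mod $p^m$ version of the theta decomposition, i.e. that $F\bmod p^m$ lies in the span of theta series of rank $\le r$ \emph{modulo $p^m$}, and then a mod $p^m$ Sturm/$\Theta$-operator bound. A secondary subtlety is handling the Dirichlet character $\chi$ and the nebentype of the theta series simultaneously; since $\chi$ is assumed quadratic, its values are $\pm1$ and contribute nothing to the $p$-power part of the weight congruence, but one still has to check that the characters match up so that the comparison of $F$ with the theta combination is legitimate. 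I would isolate these two points (propagation to $p^m$, and character compatibility) as the technical core and treat everything else as formal manipulation of Fourier expansions.
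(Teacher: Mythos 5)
This theorem is quoted in the paper from \cite{Bo-Ki3} without proof, so the comparison is with the argument of that reference rather than with anything in the present text. Your sketch has the right general shape (reduce to rank $r$, compare a weight-$k$ object with a weight-$r/2$ theta object, invoke weight rigidity of congruent forms), but the step you yourself flag as the ``main obstacle'' is essentially the whole theorem, and your proposal does not close it. Lemma \ref{lem:Fri} expresses $F_{[s]}$ in terms of the \emph{truncations} $(\theta^{(n)}_S)_{[s]}$, which are not modular forms, so no Serre/Katz-type weight-comparison theorem applies to the congruence $F\equiv F_{[0]}+\cdots+F_{[r]}\bmod p^m$. To apply such a theorem you must replace the truncated theta series by the full ones, i.e.\ prove $F\equiv\sum_S c_S\,\theta^{(n)}_S\bmod p^m$; but in the present paper that upgrade is exactly the content of (\ref{eq:gen2}) in the proof of Theorem \ref{thm:main}, and it is established there precisely \emph{by applying Theorem \ref{thm:wt-sing}} to a suitable difference. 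So the route you propose is circular unless you supply an independent argument for the theta decomposition mod $p^m$, which is the hard part. The cited proof sidesteps this by producing honest modular forms to compare: one takes $S\in\Lambda_r^+$ with $a_F(S)^*$ a $p$-adic unit, passes to the Fourier--Jacobi expansion of $F$ along that rank-$r$ block, and uses the theta decomposition of the Jacobi coefficients to obtain (vector-valued, generally half-integral weight) forms of weight $k-\frac r2$; mod $p^m$ singularity of $p$-rank $r$ forces these to be congruent to constants mod $p^m$, and weight rigidity applied to ``a form of weight $k-\frac r2$ congruent to a nonzero constant'' yields $k-\frac r2\equiv 0\bmod\frac{p-1}{2}p^{m-1}$, i.e.\ the asserted congruence.

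Two smaller points. Your claim that $a_F(S)^*\equiv a_F\left(\begin{smallmatrix}S&0\\0&0\end{smallmatrix}\right)\bmod p^m$ for $S$ of rank $r$ ``because the correction terms all have higher rank'' is false: in the defining relation the correction terms are $a_F\left(\begin{smallmatrix}S[D^{-1}]&0\\0&0\end{smallmatrix}\right)^*$ with $S[D^{-1}]$ again of rank $r$ (only the determinant drops), so the inversion is nontrivial at rank $r$. The statement you actually need---that \emph{some} rank-$r$ primitive coefficient is a $p$-adic unit---does follow (if all rank-$r$ primitive coefficients were divisible by $p$, then so would be all rank-$r$ coefficients, contradicting the definition of $p$-rank), so state it that way. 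Also, without a hypothesis like $p>r+1$ (absent from Theorem \ref{thm:wt-sing}, in contrast to the $p>2k+1$ used in Theorem \ref{thm:main}) you cannot freely divide by $\epsilon(S)$ inside $\Z_{(p)}$ when using Lemma \ref{lem:Fri}. Your second derivation of ``$r$ even'' (from $2k-r\equiv 0\bmod(p-1)$ together with $2\mid p-1$) is correct; the first, via half-integral weight, presupposes the comparison you have not established.
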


\subsection{Some $p$-adic properties of the Siegel--Eisenstein series}
We put 
\[\Gamma _\infty ^{(n)}:=\left\{\mat{A}{B}{C}{D}\in \Gamma _n\; \Big| \; C=0_n\right\}. \]
For an even integer $k$ with $k>n+1$, the Siegel--Eisenstein series of degree $n$ and weight $k$ with level $1$ is defined as 
\[E_k^{(n)}(Z):=\sum _{\smat{*}{*}{C}{D}\in \Gamma _\infty^{(n)}\backslash \Gamma_n}\det (CZ+D)^{-k},\quad Z\in \hh_n.  \]
This series defines an element of $M_k(\Gamma _n)_\Q$. 
We denote by
\[E_k^{(n)}=\sum _{T\in \Lambda _n }a_k^{(n)}(T)q^T\]
the Fourier expansion of $E_k^{(n)}$.

We put $\Lambda _n^{(r)}:=\{T\in \Lambda _n\;|\; {\rm rank}(T)=r\}$. 
Note that, if $T\in \Lambda ^{(r)}_n$ then we have
 $a_k^{(n)}(T)=a_k^{(r)}(T')$ for $T'\in \Lambda _r^+$ such that $T\sim \smat{T'}{0}{0}{0}$ mod ${\rm GL}_n(\Z)$.
Therefore, we may assume that $T\in \Lambda _{r}^+$ and may consider $a_k^{(r)}(T)$, 
instead of $a_k^{(n)}(T)$ with $T\in \Lambda _n^{(r)}$.

Let $p$ be an odd prime. 
We put 
\[{\boldsymbol X}=\Z_p\times \Z/(p-1)\Z. \]
We regard $\Z\subset {\boldsymbol X}$ via the embedding $n\mapsto (n,\widetilde{n})$ with $\widetilde{n}:=n$ mod $p-1$. 
For $(k,a)\in {\boldsymbol X}$, the $p$-adic Siegel--Eisenstein series $\widetilde{E}^{(n)}_{(k,a)}$
of degree $n$ and weight $(k,a)$ is defined as 
\[\widetilde{E}^{(n)}_{(k,a)}:=\lim _{m\to \infty }E_{k_m}^{(n)} \quad (p\text{-adic\ limit}) \]
where $k_m$ is a number sequence such that $k_m\to \infty $ ($m\to \infty$) in the usual topology of $\mathbb{R}$, and $k_m\to (k,a)$ ($m\to \infty$) in ${\boldsymbol X}$.

In this paper, we treat the case when the weights are $(k,k)$ with $k$ even and $(k,k+\frac{p-1}{2})$ 
with condition $k\equiv \frac{p-1}{2}$ mod $2$. 
We put ${\boldsymbol k}_0:=(k,k)$ and ${\boldsymbol k}_1:=(k,k+\frac{p-1}{2})$ and denote by $k_j(m)$ 
a number sequence satisfying $k_j(m)\to {\boldsymbol k}_j$ ($m\to \infty$) in ${\boldsymbol X}$.  
For any $k_j(m)$ satisfying $k_j(m)\to {\boldsymbol k}_j$ ($m\to \infty $) in ${\boldsymbol X}$, 
we may assume that $k_j(m)$ is of the form
\[k_j(m)=k+a_j(m)p^{b(m)}, \]
where $a_j(m)$ is a positive integer with $a_j(m)\equiv \frac{p-1}{2^j}$ mod $p-1$, 
and $b(m)=b_j(m)$ is a positive integer satisfying $b(m)\to \infty $ if $m\to \infty$.  
Thereafter, whenever ${\boldsymbol k}_j$ or $k_j(m)$ is involved, 
the above condition on $k$ is always assumed.

Let $v_p$ be the additive valuation on the $p$-adic field $\Q_p$ normalized such that $v_p(p)=1$. 
For a formal power series $F$ of the form $F=\sum _{T\in \Lambda _{n}}a_{F}(T)q^T$ with $a_{F}(T)\in \Q_p$,  
we define
\begin{align*}
&v_p^{(r)}(F):=\inf \{v_p(a_{F}(T))\; |\; T\in \Lambda _n,\ {\rm rank}(T)=r\}.  
\end{align*}
We put $\nu _m:=v^{(2k)}_p(E^{(2k+1)}_{k_j(m)})$. 
\begin{Prop}[\cite{Bo-Ki1}]
\label{prop:sing}
Let $p>2k+1$. 
\begin{enumerate}\setlength{\itemsep}{-3pt}
\item 
We have $\nu _m \le 0$ and $\nu _m$ is a constant for sufficiently large $m$. 
\item 
We have $p^{-\nu _m}E_{k_j(m)}^{(2k+1)}\in M_{k_j(m)}(\Gamma _n)_{\Z_{(p)}}$ and 
this is a mod $p^{c(m)}$ singular of $p$-rank $2k$. 
Here $c(m)$ is a positive integer satisfying $c(m)\to \infty $ when $m\to \infty $. 
\end{enumerate}
\end{Prop}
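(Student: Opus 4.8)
The plan is to read everything off from the classical explicit formula for the Fourier coefficients of $E_\kappa^{(r)}$ (see the discussion in \cite{Bo-Ki1}). Write $\kappa=k_j(m)=k+a_j(m)p^{b(m)}$ and choose the representative $a_j(m)$ prime to $p$. For $\kappa$ even with $\kappa>r+1$ and $T\in\Lambda_r^+$, that formula presents $a_\kappa^{(r)}(T)$ as an elementary factor times a finite product of local Siegel series $F_q(T,\kappa)$ (trivial for almost all $q$), divided by
\[
\zeta(1-\kappa)\prod_{i=1}^{\lfloor(r-1)/2\rfloor}\zeta(1-2\kappa+2i),
\]
and, when $r$ is even, multiplied by a Dirichlet $L$-value $L(1-\kappa+\tfrac r2,\chi_{d_T})$ attached to the genus of $T$; in passing from rank $2k$ to rank $2k+1$ the denominator thus acquires the extra factor $\zeta(1-2\kappa+2k)$ (and the $L$-value disappears). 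Since a Fourier coefficient of $E_\kappa^{(2k+1)}$ supported on rank $r\le 2k$ coincides with $a_\kappa^{(r)}(T)$ for $T\in\Lambda_r^+$, the proposition reduces to controlling the $p$-adic size of all these factors as $m\to\infty$ (so $b(m)\to\infty$).

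Two facts do the work. First, a congruence modulo $p-1$: one checks $2a_j(m)\equiv0\pmod{p-1}$ for $j\in\{0,1\}$ (since $a_0(m)\equiv0$, resp.\ $2a_1(m)\equiv p-1$, modulo $p-1$), so that $2\kappa-2k=2a_j(m)p^{b(m)}$ is divisible by $p-1$ with $p$-valuation $b(m)$; by von Staudt--Clausen, $\zeta(1-2\kappa+2k)=\zeta(1-2a_j(m)p^{b(m)})$ therefore has $p$-valuation exactly $-1-b(m)$, so its reciprocal contributes $+1+b(m)$ wherever it appears in a denominator. Second, the hypothesis $p>2k+1$ ensures that every \emph{other} zeta value occurring for $r\le 2k+1$ — namely $\zeta(1-\kappa)$ and $\zeta(1-2\kappa+2i)$, $1\le i\le k-1$ — has index prime to $p-1$; by the Kummer congruences these are $p$-integral and stabilize $p$-adically, as do the even-rank $L$-values and the finitely many local factors $F_q(T,\kappa)$. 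Consequently all Fourier coefficients of $E_\kappa^{(2k+1)}$ of rank $\le 2k$ are $p$-integral up to a denominator whose valuation is bounded independently of $m$, so $\mu_m:=\inf_T v_p\!\big(a_{k_j(m)}^{(2k+1)}(T)\big)$ (infimum over all $T$) is finite and $p^{-\mu_m}E_{k_j(m)}^{(2k+1)}\in M_{k_j(m)}(\Gamma_{2k+1})_{\Z_{(p)}}$.

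Now combine. By the first fact every rank-$(2k+1)$ Fourier coefficient of $E_{k_j(m)}^{(2k+1)}$ has $p$-valuation at least $\mu_m+(1+b(m))$, so the $p$-integral form $p^{-\mu_m}E_{k_j(m)}^{(2k+1)}$ is mod $p^{c(m)}$ singular with $c(m):=1+b(m)\to\infty$, of some $p$-rank $r'\le 2k$. Applying Theorem~\ref{thm:wt-sing} to it gives that $r'$ is even and $2k_j(m)-r'\equiv0\pmod{p-1}$; since $2k_j(m)\equiv2k\pmod{p-1}$ and $0\le r'\le 2k<p-1$, we must have $r'=2k$. Hence $\nu_m=v_p^{(2k)}(E_{k_j(m)}^{(2k+1)})=\mu_m\le v_p\!\big(a_{k_j(m)}^{(2k+1)}(0)\big)=v_p(1)=0$, which gives the $\le 0$ part of (1) as well as the integrality and singularity statements of (2); and $\nu_m$ is eventually constant because it equals $-v_p\!\big(\zeta(1-\kappa)\prod_{i=1}^{k-1}\zeta(1-2\kappa+2i)\big)$ plus the infimum over $T$ of the $p$-valuation of the numerator of $a_\kappa^{(2k)}(T)$, both summands stabilizing with $m$ (the second one being the point addressed below).

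I expect the real effort to be in two places. One is making the explicit formula precise with all constants and, above all, handling the even-rank $L$-factor correctly for those $T$ with $d_T$ a square, where it formally degenerates; an inspection shows that such coefficients do not lower the infimum defining $\nu_m$, but this has to be verified. The other is the stabilization of that infimum, i.e.\ that $\nu_m$ does not drift: this amounts to exhibiting a fixed $T_0\in\Lambda_{2k}^+$ for which the numerator $L(1-\kappa+k,\chi_{d_{T_0}})\prod_q F_q(T_0,\kappa)$ is a $p$-adic unit for all large $m$, a non-vanishing mod $p$ statement for a quadratic twist of an $L$-value at a negative integer — and here again $p>2k+1$ is exactly what confines the relevant (generalized) Bernoulli numbers to the ``regular range'' and makes the statement accessible.
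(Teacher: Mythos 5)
First, note that this paper does not prove Proposition~\ref{prop:sing} at all: it is imported verbatim from \cite{Bo-Ki1} (Proposition 3.3 together with Remark 3.4, which observes that the $p$-regularity hypothesis is not needed when $n=2k+1$). Judging from how the present paper invokes ``\cite{Bo-Ki1} Subsections 3.2, 3.3'' in the proof of Lemma~\ref{lem:lev-ch}, the source's argument is exactly the one you set up: the explicit formula for $a_\kappa^{(r)}(T)$ as local densities over a product of $\zeta$-values (plus an $L$-value in even rank), von Staudt--Clausen applied to the extra factor $\zeta(1-2\kappa+2k)$ that appears only in rank $2k+1$, Kummer congruences for the remaining factors (whose indices are prime to $p-1$ precisely because $p>2k+1$), and Theorem~\ref{thm:wt-sing} to force the $p$-rank up to exactly $2k$. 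So the skeleton is right and matches the source.

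As a proof, however, it has gaps that you yourself flag but do not close. The most serious concerns assertion (1): the eventual constancy of $\nu_m$ is not established. You reduce it to exhibiting a fixed $T_0$ whose numerator $L$-value is a $p$-adic unit for all large $m$ and then assert this is ``accessible''; that is not an argument, and in fact no such non-vanishing is needed. A softer route works: once you know $\mu_m\le 0$ and that the rank $\le 2k$ coefficients satisfy a Kummer-type congruence $a_{k_j(m)}^{(r)}(T)\equiv a_{k_j(m')}^{(r)}(T) \bmod p^{N(m,m')}$ with $N(m,m')\to\infty$ \emph{uniformly in $T$}, the ultrametric inequality forces $\mu_m=\mu_{m'}$ for large $m,m'$; the uniformity in $T$ (degrees of the local polynomials grow with $T$, but they are evaluated at $p$-adic units congruent mod $p^{b}$) is the point to verify. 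Second, your key quantitative claim --- that every rank-$(2k+1)$ coefficient has valuation at least $\mu_m+(1+b(m))$ --- does not follow from the $\zeta(1-2\kappa+2k)$ computation alone: you need a lower bound, uniform in $T$ and $m$, on the valuation of the numerator together with the remaining denominator factors, and this is exactly where the local density at $q=p$ (with its $p^{-\kappa}$-type terms, controlled only via the functional equation of the Siegel series) and the degenerate even-rank $L$-factor for square-discriminant $T$ enter. Until those two points are carried out, the mod $p^{c(m)}$ singularity and the identification $\nu_m=\mu_m$ are not yet proved. The final step (Theorem~\ref{thm:wt-sing} plus $0\le r'\le 2k<p-1$ forcing $r'=2k$, hence $\nu_m=\mu_m\le v_p(a(0))=0$) is correct as written.
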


\begin{Rem}
Both properties (1), (2) were proved in \cite{Bo-Ki1} Proposition 3.3.  
As written in \cite{Bo-Ki1} Remark 3.4, the regularity condition for $p$ is not necessary when $n=2k+1$. 
\end{Rem}

For $S\in \Lambda _r^+$ with $r$ even, we define $\chi _S$ by 
\begin{align*}
\chi _S(d):={\rm sign} (d)^\frac{r}{2} \left( \frac{(-1)^\frac{r}{2}\det 2S}{|d|} \right).
\end{align*}  
Let $\eta _S$ be the primitive character associated with $\chi _S$. 
Let $\chi _p:=(\frac{*}{p})$ be the unique nontrivial quadratic character mod $p$.  
We write $\chi _p^0$ for the trivial character mod $p$. 
The level of $S$ is defined as the smallest positive integer $l$ such that $l(2S)^{-1}\in 2\Lambda _{r}$. 
We denote it by ${\rm level}(S)$. 
\begin{Lem}
\label{lem:lev-ch}
Let $p>2k+1$ and $S\in \Lambda _{2k}^+$. 
\begin{enumerate}\setlength{\itemsep}{-3pt}
\item
We have $\lim _{m\to \infty }a_{k_j(m)}^{(2k)}(S)^*\in \Q$ ($p$-adic limit). 
\item
If $\lim _{m\to \infty}a_{k_j(m)}^{(2k)}(S)^*\neq 0$ then ${\rm level}(S)\mid p$ and $\chi _S=\chi _p^j$. 
\item
For any $S\in \Lambda _{2k}$ with ${\rm level}(S)\mid p$, we have $v_p(a_{k_j(m)}^{(2k)}(S)^*)\ge \nu _m$
when $m$ is sufficiently large. 
\end{enumerate}
\end{Lem}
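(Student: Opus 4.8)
The plan is to prove the three parts of Lemma~\ref{lem:lev-ch} by exploiting the explicit Fourier coefficient formula for Siegel--Eisenstein series and its behaviour under $p$-adic interpolation, combined with Theorem~\ref{thm:wt-sing} and Proposition~\ref{prop:sing}.

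\medskip\noindent
\textbf{Part (1).} First I would recall the Katsurada/Kaufhold-type closed formula for the primitive Fourier coefficient $a_k^{(2k)}(S)^*$ of $E_k^{(2k)}$ at $S\in\Lambda_{2k}^+$: up to elementary factors it is a product of $\zeta(1-k)^{-1}\prod_{i=1}^{k-1}\zeta(1-2k+2i)^{-1}$, an $L$-value $L(1-k+k,\eta_S)^{-1}=L(1-k,\eta_S)$... more precisely a ratio whose numerator involves $L(k-(2k)/2,\eta_S)$ and whose denominator is a product of $\zeta$-values and finitely many local (Siegel-series) factors $b_q(S;\,\cdot\,)$ at the primes $q\mid \det(2S)$. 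Each ingredient is known to be $p$-adically continuous in the weight by the theory of $p$-adic $L$-functions (Kubota--Leopoldt for $\zeta$ and for $L(s,\eta_S)$ with the tame conductor of $\eta_S$ prime to $p$, which holds since $p>2k+1$ and $\det(2S)$... here one must be slightly careful, see below) together with the polynomial (hence trivially continuous) dependence of the $b_q(S;X)$ on $X=q^{-s}$. Specializing $s$ to the interpolation point attached to ${\boldsymbol k}_j$ and noting that the relevant $L$-values lie in $\Q$, one gets that $\lim_{m\to\infty} a_{k_j(m)}^{(2k)}(S)^*$ exists and is rational. The one genuine subtlety is the possible pole/zero of $\zeta_p$ or $L_p$ at the interpolation point; this is exactly where the constraint $p>2k+1$ and the choice of $a_j(m)\equiv \tfrac{p-1}{2^j}$ enter, guaranteeing we never hit a trivial zero or the pole of $\zeta_p$ at $s=1$.

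\medskip\noindent
\textbf{Part (2).} Assume $\lim_m a_{k_j(m)}^{(2k)}(S)^*\neq 0$. Consider the limit object $\widetilde E^{(2k+1)}_{{\boldsymbol k}_j}$; by Proposition~\ref{prop:sing} the normalized form $p^{-\nu_m}E^{(2k+1)}_{k_j(m)}$ is mod $p^{c(m)}$ singular of $p$-rank $2k$, and by Lemma~\ref{lem:Fri} its rank-$2k$ part is a linear combination of theta series $\theta^{(n)}_S$ with coefficients proportional to $a^{(2k)}(S)^*$. The non-vanishing hypothesis says $S$ contributes to this limit. Now apply Theorem~\ref{thm:wt-sing}: the relevant congruence weight forces $\chi_S$ (the quadratic character cut out by $\theta^{(2k)}_S$, whose conductor divides $\mathrm{level}(S)$) to match, mod $p$, the nebentypus arising in the limit --- which is $\chi_p^j$ by construction of ${\boldsymbol k}_j$ (even $j=0$ gives trivial character, $j=1$ gives $\chi_p$). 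For the level divisibility: if a prime $q\neq p$ divided $\mathrm{level}(S)$, the local factor $b_q(S;X)^{-1}$ appearing in $a_{k_j(m)}^{(2k)}(S)^*$ would carry a factor $(1-\eta_S(q)q^{\,\cdot})\cdots$ that, under the $p$-adic limit $k_j(m)\to\infty$ with $q^{k_j(m)}\to\infty$ or $\to 0$ $p$-adically appropriately... the cleaner argument is: a genus theta series of level exactly $\mathrm{level}(S)$ cannot be $p$-adically approximated by level-$p$ Eisenstein series unless its level already divides $p$; equivalently, the $U_q$/$W_q$ eigenstructure at $q\nmid p$ obstructs the congruence. So one must show $\mathrm{level}(S)\mid p$. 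I would package this via the arithmetic of the $b_q$-factors and the fact that $L_p(s,\eta_S)$ interpolates $L$-values with the $q$-Euler factor for $q\mid$ (tame conductor) removed.

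\medskip\noindent
\textbf{Part (3).} For $S\in\Lambda_{2k}$ with $\mathrm{level}(S)\mid p$, one must show $v_p\big(a_{k_j(m)}^{(2k)}(S)^*\big)\ge \nu_m$ for large $m$. Here $\nu_m=v_p^{(2k)}\big(E^{(2k+1)}_{k_j(m)}\big)=\inf_T v_p\big(a_{k_j(m)}(T)\big)$ over rank-$2k$ $T$, so trivially $v_p(a_{k_j(m)}^{(2k)}(S))\ge\nu_m$ for the \emph{ordinary} coefficient; the content is to pass to the \emph{primitive} coefficient. Using the defining Möbius-type relation between $a_F(\smat{S}{0}{0}{0})$ and the $a_F(\smat{S[D^{-1}]}{0}{0}{0})^*$, invert it to write $a^{(2k)}(S)^*$ as an integral ($\Z_{(p)}$-)combination of the $a^{(2k)}(S')$ with $S'$ running over divisors --- crucially, since $\mathrm{level}(S)\mid p$ is squarefree and prime to... the only $D$'s with $\det D$ a $p$-power enter, and the inversion has $p$-integral coefficients, or one controls the denominators by a bounded power of $p$ that is absorbed as $m\to\infty$ because $\nu_m$ stabilizes (Proposition~\ref{prop:sing}(1)) while $c(m)\to\infty$. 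That stabilization is what lets ``sufficiently large $m$'' do the work.

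\medskip
The main obstacle I anticipate is Part~(2), specifically pinning down that $\mathrm{level}(S)\mid p$ (rather than merely controlling the nebentypus): it requires knowing precisely which local Siegel-series factors $b_q(S;X)$ survive the $p$-adic limit and showing a nonzero limit is incompatible with any tame prime $q\mid\det(2S)$ beyond what is forced by $\mathrm{level}(S)\mid p$. This is really a statement about the support of the $p$-adic measure interpolating the $a_k^{(2k)}(S)^*$, and getting it cleanly --- rather than through a long case analysis of Katsurada's explicit formulas --- is the delicate point.
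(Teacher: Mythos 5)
The paper's proof of this lemma is short: parts (1) and (3) are quoted directly from \cite{Bo-Ki1} Subsections 3.2--3.3, and part (2) is reduced to the corresponding known statement for the \emph{ordinary} (non-primitive) coefficients $a_{k_j(m)}^{(2k)}(S)$. The reduction is the step you are missing. Since $a_F(T)=\sum_D a_F(T[D^{-1}])^*$ can be inverted, $a_{k_j(m)}^{(2k)}(S)^*$ is a finite integral linear combination of the $a_{k_j(m)}^{(2k)}(S[D^{-1}])$; if the limit of the primitive coefficient is nonzero, some $\lim_m a_{k_j(m)}^{(2k)}(S[D^{-1}])\neq 0$, and the result of \cite{Bo-Ki1} for ordinary coefficients then gives $\eta_{S[D^{-1}]}=\chi_p^j$ or $\boldsymbol 1$; since $\eta_S=\eta_{S[D^{-1}]}$ (the primitive character is invariant under $S\mapsto S[D^{-1}]$) and ${\rm level}(S)\mid p$, one concludes $\chi_S=\chi_p^j$. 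Your part (2) instead tries to extract the character from Theorem \ref{thm:wt-sing}, which is a statement about the congruence $2k-r\equiv 0 \bmod (p-1)p^{m-1}$ for the weight and $p$-rank of a mod $p^m$ singular form; it carries no information about the character $\chi_S$ of an individual quadratic form $S$, so that step does not work. Your fallback for ${\rm level}(S)\mid p$ (local Siegel-series factors, $U_q/W_q$ eigenstructure) is left as two unfinished sketches, and you acknowledge as much; this is precisely the content that the paper imports wholesale from \cite{Bo-Ki1} Subsection 3.3 rather than reproving. So part (2) of your proposal has a genuine gap: a wrong tool for the character determination and no completed argument for the level divisibility.

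For parts (1) and (3) your instincts are closer to the mark, though the paper itself only cites \cite{Bo-Ki1}. Your part (3) idea is essentially right and can be made clean: $\nu_m$ is by definition a lower bound for $v_p$ of every rank-$2k$ ordinary coefficient of $E^{(2k+1)}_{k_j(m)}$, and the primitive coefficient is a finite $\Z$-linear combination of ordinary ones, so the bound passes over; there is no need for the detour through ``$\det D$ a $p$-power'' or absorbing denominators as $m\to\infty$. Your part (1), via Katsurada/Kaufhold-type formulas and Kubota--Leopoldt interpolation, is in the spirit of the cited computation but is left at the level of a plan, with the delicate points (trivial zeros, the pole of $\zeta_p$, primes dividing $\det 2S$) flagged rather than resolved.
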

\begin{proof}
The assertions (1), (3) follow from \cite {Bo-Ki1} Subsections 3.2, 3.3. 

We prove (2). As written in \cite{Bo-Ki1} Subsection 3.3, 
if $\lim _{m\to \infty }a_{k_j(m)}^{(2k)}(S)\neq 0$ then we have 
$\eta _S=\begin{cases}{\boldsymbol 1} \ (j=0) \\ \chi _p\ (j=1). \end{cases}$ 
Suppose that $\lim _{m\to \infty} a_{k_j(m)}^{(2k)}(S)^*\neq 0$. 
Then ${\rm level}(S)\mid p$ follows from the reasoning of \cite{Bo-Ki1} Subsection 3.3 again. 
We need to confirm $\chi _S=\chi _p^j$. 
Recall that $a_{k_m}^{(2k)}(S)^*$ is a linear combination of $a_{k_m}^{(2k)}(S[D^{-1}])$ with an element $D$ of ${\rm GL}_n(\Z)\backslash \{D\in \Z^{n,n}\; |\; \det D\neq 0\}$ satisfying $T[D^{-1}]\in \Lambda _r^+$. 
There exists at least one $D$ such that $\lim _{m\to \infty } a_{k_j(m)}^{(2k)}(S[D^{-1}])\neq 0$. 
By the above fact, we have $\eta _{S[D^{-1}]}=\begin{cases}{\boldsymbol 1} \ (j=0) \\ \chi _p\ (j=1). \end{cases}$ 
Since $\eta _S=\eta _{S[D^{-1}]}$, we obtain $\eta _S=\begin{cases}{\boldsymbol 1} \ (j=0) \\ \chi _p\ (j=1). \end{cases}$
Therefore we get $\chi _S=\chi _p^j$ because of ${\rm level}(S)\mid p$.  
\end{proof}

\section{Main theorem and its proof}
Let $S\in \Lambda _m^+$ and ${\rm gen}(S)$ be the genus containing $S$. 
Let $\{S_1,\cdots ,S_h\}$ be a set representatives of ${\rm GL}_m(\Z)$-equivalence classes in ${\rm gen}(S)$. 
The genus theta series associated with $S$ is defined by 
\[\Theta ^{(n)}_{{\rm gen}(S)}(Z):=\left(\sum _{i=1}^h\frac{\theta _{S_i}^{(n)}(Z)}{\epsilon(S_i)}\right)/\left(\sum _{i=1}^h\frac{1}{\epsilon (S_i)}\right). \]
We put 
\[(\Theta ^{(n)}_{{\rm gen}(S)})^0:=\left(\sum _{i=1}^h\frac{1}{\epsilon (S_i)}\right)\cdot \Theta ^{(n)}_{{\rm gen}(S)}. \]

The following theorem is the main result of this paper.
\begin{Thm}
\label{thm:main}
Let $n$, $k$ be positive integers with $n\le 2k+1$ and $p$ a prime with $p>2k+1$.
Assume that $k$ is even in ${\boldsymbol k}_0$ and 
$k\equiv \frac{p-1}{2}$ mod $2$ in ${\boldsymbol k}_1$. 
Then we have
\begin{align}
\label{eq:lin0}
\widetilde{E}_{{\boldsymbol k}_j}^{(n)}= \sum _{\substack{{\rm gen}(S)\\ \ {\rm level}(S)\mid p \\ \chi _S=\chi _p^j}}\widetilde{a}_j({\rm gen}(S))\cdot (\Theta ^{(n)}_{{\rm gen}(S)})^0,\quad (\widetilde{a}_j({\rm gen}(S))\in \Q), 
\end{align}
where the summation in (\ref{eq:lin0}) goes over all genera ${\rm gen}(S)$ of $S\in \Lambda _{2k}^+$ with level dividing $p$ satisfying $\chi_S=\chi _p^j$.   

\end{Thm}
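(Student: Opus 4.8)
The plan is to settle the case $n=2k+1$ first and then deduce all $n\le 2k+1$ by the Siegel $\Phi$-operator. Indeed, $\Phi$ commutes with the $p$-adic limit defining $\widetilde E^{(\bullet)}_{{\boldsymbol k}_j}$ (it only selects the Fourier coefficients supported on matrices $\smat{T}{0}{0}{0}$, and those converge $p$-adically), one has $\Phi E^{(n+1)}_{k_j(m)}=E^{(n)}_{k_j(m)}$ once $k_j(m)>n+2$, and $\Phi\bigl((\Theta^{(n+1)}_{{\rm gen}(S)})^0\bigr)=(\Theta^{(n)}_{{\rm gen}(S)})^0$. Hence \eqref{eq:lin0} for $n=2k+1$ forces it for every $n\le 2k+1$ with the same coefficients $\widetilde a_j({\rm gen}(S))$. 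From now on $n=2k+1$.

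Put $F_m:=p^{-\nu_m}E^{(2k+1)}_{k_j(m)}$. By Proposition \ref{prop:sing} this lies in $M_{k_j(m)}(\Gamma_n)_{\Z_{(p)}}$ and is mod $p^{c(m)}$ singular of $p$-rank $2k$ with $c(m)\to\infty$, so $F_m\equiv\sum_{r=0}^{2k}(F_m)_{[r]}\pmod{p^{c(m)}}$. Expanding each $(F_m)_{[r]}$ by Lemma \ref{lem:Fri} and letting $m\to\infty$ (for a fixed $q^T$ the sum over $S$ is finite, $\nu_m$ is eventually a constant $\nu\le 0$, $c(m)\to\infty$, and each limit $b^{(r)}_j(S):=\lim_{m}a^{(r)}_{k_j(m)}(S)^*$ exists because the Fourier coefficients of $E^{(2k+1)}_{k_j(m)}$ converge $p$-adically and the primitive ones are fixed $\Z$-linear combinations of finitely many of them, with $b^{(2k)}_j(S)\in\Q$ by Lemma \ref{lem:lev-ch}(1)) we obtain
\[
\widetilde E^{(2k+1)}_{{\boldsymbol k}_j}=\sum_{r=0}^{2k}\ \sum_{S\in\Lambda_r^+/{\rm GL}_r(\Z)}\frac{b^{(r)}_j(S)}{\epsilon(S)}\,(\theta^{(2k+1)}_S)_{[r]},\qquad \bigl(\widetilde E^{(2k+1)}_{{\boldsymbol k}_j}\bigr)_{[2k+1]}=0.
\]

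Now set $\widetilde a_j({\rm gen}(S)):=b^{(2k)}_j(S)$ for $S\in\Lambda_{2k}^+$. Since the primitive Fourier coefficients of a Siegel--Eisenstein series are genus invariants, $b^{(2k)}_j$ depends only on ${\rm gen}(S)$, and by Lemma \ref{lem:lev-ch}(2) it vanishes unless ${\rm level}(S)\mid p$ and $\chi_S=\chi^j_p$. Because $p>2k+1$, a lattice of rank $\le 2k$ has no automorphism of order $p$ (its characteristic polynomial cannot be divisible by the degree-$(p-1)$ cyclotomic polynomial), so $p\nmid\epsilon(S_i)$ for every $S_i$ occurring; hence the finite sum $\Xi:=\sum_{{\rm gen}(S)}\widetilde a_j({\rm gen}(S))(\Theta^{(2k+1)}_{{\rm gen}(S)})^0$, taken over the genera prescribed in \eqref{eq:lin0}, is $\Z_{(p)}$-integral and is a modular form of weight $k$. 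A direct count shows that the primitive rank-$r$ Fourier coefficient of $\theta^{(n)}_{S'}$ at $S''\in\Lambda_r^+$ equals $\epsilon(S'')$ if $S'\sim S''$ and $0$ otherwise; consequently the $(\theta^{(n)}_S)_{[r]}$ with $S\in\Lambda_r^+/{\rm GL}_r(\Z)$ are linearly independent, and expanding $\Xi$ by Lemma \ref{lem:Fri} the desired identity $\widetilde E^{(2k+1)}_{{\boldsymbol k}_j}=\Xi$ becomes equivalent to $b^{(r)}_j(S)=c^{(r)}_j(S):=a_\Xi(S)^*$ for all $r\le 2k$ and all $S$. For $r=2k$ this is automatic, since $\bigl((\Theta^{(2k+1)}_{{\rm gen}(S)})^0\bigr)_{[2k]}=\sum_{S'\in{\rm gen}(S)/{\rm GL}_{2k}(\Z)}\epsilon(S')^{-1}(\theta^{(2k+1)}_{S'})_{[2k]}$ yields $c^{(2k)}_j(S)=\widetilde a_j({\rm gen}(S))=b^{(2k)}_j(S)$, both sides vanishing off the prescribed genera by Lemma \ref{lem:lev-ch}(2).

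The remaining case $r<2k$ is the heart of the matter. Here $c^{(r)}_j(S)=\sum_{{\rm gen}(T)}\widetilde a_j({\rm gen}(T))\,\rho^*(S,{\rm gen}(T))$, where $\rho^*(S,{\rm gen}(T))=\sum_i r^*(T_i,S)/\epsilon(T_i)$ is the genus average of the primitive representation numbers of $S$ by the rank-$2k$ genus ${\rm gen}(T)$, while $b^{(r)}_j(S)=\lim_m a^{(r)}_{k_j(m)}(S)^*$. I would establish, for $m$ large, the congruence
\[
a^{(r)}_{k_j(m)}(S)^*\ \equiv\ \sum_{\substack{{\rm gen}(T)\\ {\rm rank}\,2k}}a^{(2k)}_{k_j(m)}(T)^*\,\rho^*(S,{\rm gen}(T))\pmod{p^{c'(m)}},\qquad c'(m)\to\infty,
\]
by substituting the explicit formulas for the primitive Fourier coefficients of $E^{(r)}_{k_j(m)}$ and of $E^{(2k)}_{k_j(m)}$ (products of local Siegel densities times $\zeta$- and $L$-values) together with Siegel's formula for $\rho^*$: at each prime $\ell\ne p$ the corresponding local factor depends $p$-adically continuously on the weight $k_j(m)$ by Kummer-type congruences (the arguments of the $\zeta$- and $L$-values that appear stay away from the forbidden residues modulo $p-1$ precisely because $2k<p-1$, cf.\ Theorem \ref{thm:wt-sing}), while at $p$ the mod $p^{c(m)}$ singular structure of $F_m$ forces the two sides to agree; this is carried out exactly as in \cite{Bo-Ki1} Subsections 3.2--3.3. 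Passing to the limit then gives $b^{(r)}_j(S)=c^{(r)}_j(S)$, whence $\widetilde E^{(2k+1)}_{{\boldsymbol k}_j}=\Xi$. The main obstacle is exactly this last step — identifying the $p$-adic limits of the lower-rank primitive Eisenstein coefficients with the genus-theta coefficients — where the local computations that carry the whole argument are concentrated.
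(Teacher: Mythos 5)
Your reduction to $n=2k+1$, the identification of the rank-$2k$ part via Proposition \ref{prop:sing}, Lemma \ref{lem:Fri} and Lemma \ref{lem:lev-ch}, and the observation that $p>2k+1$ forces $p\nmid\epsilon(S)$ all match the paper. But the step you yourself flag as ``the heart of the matter'' --- showing that the lower-rank ($r<2k$) primitive coefficients of the Eisenstein limit agree with those of the genus-theta combination --- is not actually proved. You propose to verify it by explicit local computations (Siegel densities, Kummer-type congruences, Siegel's formula for genus-averaged primitive representation numbers), ``carried out exactly as in \cite{Bo-Ki1} Subsections 3.2--3.3''; those subsections, however, only treat the $p$-adic limits of the rank-$2k$ primitive coefficients and say nothing about matching lower-rank Eisenstein coefficients with representation numbers of rank-$2k$ genera. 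What you are gesturing at is essentially a $p$-adic Siegel--Weil identity in every lower rank, which is precisely the heavy computation this paper is structured to avoid; asserting that ``the mod $p^{c(m)}$ singular structure of $F_m$ forces the two sides to agree'' at $p$ is unsupported, since singularity constrains the coefficients of rank $>2k$, not those of rank $r<2k$.

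The paper closes this gap by a soft argument you are missing two ingredients for. First, since the theta series have weight $k$ while $E^{(2k+1)}_{k_j(m)}$ has weight $k_j(m)=k+a_j(m)p^{b(m)}$, one multiplies the theta sum by ${\mathcal E}^{t_j(m)p^{b(m)}}$, where ${\mathcal E}\in M_{\frac{p-1}{2}}(\Gamma_0^{(2k+1)}(p),\chi_p)_{\Z_{(p)}}$ satisfies ${\mathcal E}\equiv 1 \bmod p$ (from \cite{Bo-Na}), so that the difference $F$ is a genuine element of $M_{k_j(m)}(\Gamma_0^{(2k+1)}(p),\chi_p^j)_{\Z_{(p)}}$. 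Second, if $F\not\equiv 0\bmod p^{c''(m)}$, then by \eqref{eq:cong} and the singularity of both terms in ranks $>2k$, $F$ would be mod $p^{c''(m)}$ singular of $p$-rank $r<2k$, and Theorem \ref{thm:wt-sing} would give $2k-r\equiv 0\bmod p-1$, impossible because $0<2k-r\le 2k<p-1$. This disposes of all ranks $r<2k$ at once with no local computation. Without the weight-matching device and the appeal to Theorem \ref{thm:wt-sing}, your argument does not reach the conclusion.
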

\begin{Rem}
\begin{enumerate} \setlength{\itemsep}{-3pt}
\item
More precisely, we have $\widetilde{a}_j({\rm gen}(S))=\lim _{m\to \infty }a_{k_j(m)}^{(2k)}(S)^*$. 
This $p$-adic limit is given explicitly by the formula in \cite{Bo-Ki1} Theorem 3.8. 
\item
The main point of this theorem is that the regularity condition on $p$ we assumed in \cite{Bo-Ki1} Theorem 1.1 (under no condition on $n$ at all) can be removed when $n\le 2k+1$.
\item
By the same proof as \cite{Bo-Ki1} Corollary 1.3, we have 
\[\widetilde{E}_{{\boldsymbol k}_j}^{(n)}\mid U(p)=\widetilde{E}_{{\boldsymbol k}_j}^{(n)}, \]
where $U(p)$ is the standard Hecke operator of level $p$. 
\item
We notice that the condition $p>2k+1$ came up in Proposition \ref{prop:sing} and Lemma \ref{lem:lev-ch}.
It also will appears in the final step of the proof for Theorem \ref{thm:main}. 
We do not know if this condition is really necessary. 
\end{enumerate}
\end{Rem}

\begin{proof}[Proof of Theorem \ref{thm:main}]
It suffices to consider the case $n=2k+1$, since we may apply the Siegel $\Phi$-operator to $\widetilde{E}_{{\boldsymbol  k}_j}^{(2k+1)}$ if $n<2k+1$. 

By Proposition \ref{prop:sing}, $p^{-\nu _m }E_{k_j(m)}^{(2k+1)}$ is a mod $p^{c(m)}$ singular of $p$-rank $2k$ for some $c(m)$ satisfying $c(m)\to \infty $ when $m\to \infty $. 
Applying Lemma \ref{lem:Fri} to $p^{-\nu _m }E_{k_j(m)}^{(2k+1)}$, we have 
\[p^{-\nu _m }(E_{k_j(m)}^{(2k+1)})_{[2k]}=\sum _{S\in \Lambda _{2k}^+/{\rm GL}_{2k}(\Z)}p^{-\nu _m }\frac{a_{k_j(m)}^{(2k)}(S)^*}{\epsilon (S)}
(\theta _S^{(2k+1)})_{[2k]}.\]
Note that $p^{-\nu _m }\frac{a_{k_j(m)}^{(2k)}(S)^*}{\epsilon (S)}\in \Z_{(p)}$, since $v_p(a_{k_j(m)}^{(2k)}(S)^*)\ge \nu _m$ (Proposition \ref{prop:sing} (3)) and $p>2k+1$ implies $p\nmid \epsilon (S)$.  
By Lemma \ref{lem:lev-ch}, we have
\begin{align}
\label{eq:cong}
p^{-\nu _m }(E_{k_j(m)}^{(2k+1)})_{[2k]}\equiv \sum _{\substack{S\in \Lambda _{2k}^+/{\rm GL}_{2k}(\Z)\\ {\rm level}(S)\mid p \\ \chi _S=\chi _p^j}}p^{-\nu _m }\frac{a_{k_j(m)}^{(2k)}(S)^*}{\epsilon (S)}
(\theta _S^{(2k+1)})_{[2k]} \bmod{p^{c'(m)}}  
\end{align}
for some $c'(m)$ satisfying $c'(m)\to \infty $ when $m\to \infty $. 
In particular, this is a finite sum.

We prove that 
\begin{align}
\label{eq:gen2}
p^{-\nu _m }E_{k_j(m)}^{(2k+1)}\equiv \sum _{\substack{S\in \Lambda _{2k}^+/{\rm GL}_{2k}(\Z)\\ {\rm level}(S)\mid p \\ \chi _S=\chi _p^j}}p^{-\nu _m }\frac{a_{k_j(m)}^{(2k)}(S)^*}{\epsilon (S)}
\theta _S^{(2k+1)} \bmod{p^{c''(m)}}
\end{align}
for some $c''(m)$ satisfying $c''(m)\to \infty $ when $m\to \infty $.
Taking ${\mathcal E}\in M_{\frac{p-1}{2}}(\Gamma _0^{(2k+1)}(p),\chi _p)_{\mathbb{Z}_{(p)}}$ from \cite {Bo-Na} such that ${\mathcal E}\equiv 1$ mod $p$, we consider 
\begin{align*}
F:&=p^{-\nu _m }E_{k_j(m)}^{(2k+1)}-\sum _{\substack{S\in \Lambda _{2k}^+/{\rm GL}_{2k}(\Z)\\ {\rm level}(S)\mid p \\ \chi _S=\chi _p^j}}p^{-\nu _m }\frac{a_{k_j(m)}^{(2k)}(S)^*}{\epsilon (S)}
\theta _S^{(2k+1)}\cdot {\mathcal E}^{t_j(m)p^{b(m)}}\\
&\in M_{k_j(m)}(\Gamma _0^{(2k+1)}(p),\chi _p^j)_{\Z_{(p)}}. 
\end{align*}
Here $t_j(m)\in \Z_{\ge 0}$ is defined as the number satisfying $a_j(m)=t_j(m)\cdot \frac{p-1}{2}$.  
(Recall that $k_j(m)=k+a_j(m)\cdot p^{b(m)}$, $b(m)\to \infty $ ($m\to \infty $), and $a_j(m)\equiv \frac{p-1}{2^j} \bmod{p-1}$.) 

We put $c''(m):=\min \{c'(m),b(m)+1\}$. 
Then we want to prove $F\equiv 0$ mod $p^{c''(m)}$. 
Seeking a contradiction, we suppose that $F\not \equiv 0$ mod $p^{c''(m)}$. 
Then $F$ is a mod $p^{c''(m)}$ singular of $p$-rank $r$ with $0\le r<2k$ because of (\ref{eq:cong}). 
Applying Theorem \ref{thm:wt-sing}, we have $2k_j(m)-r\equiv 0$ mod $(p-1)p^{c''(m)-1}$. 
In particular, $2k-r\equiv 0$ mod $p-1$. Since $r<2k<p-1$, this is a contradiction. 
Hence we have $F\equiv 0$ mod $p^{c''(m)}$.

Note that $\nu_m \to \nu $ ($m\to \infty $) for some constant $\nu$ (Proposition \ref{prop:sing} (1)), 
and $\lim _{m\to \infty }a_{k_j(m)}^{(2k)}(S)^*=:\widetilde{a}_j(S)\in \Q$ (Lemma \ref{lem:lev-ch} (1)). 
Taking a $p$-adic limit of both sides of (\ref{eq:gen2}), we have 
\begin{align*}
p^{-\nu }\widetilde{E}_{{\boldsymbol k}_j}^{(n)}= \sum _{\substack{S\in \Lambda _{2k}^+/{\rm GL}_{2k}(\Z)\\ {\rm level}(S)\mid p \\ \chi _S=\chi _p^j}}p^{-\nu }\frac{\widetilde{a}_j(S)}{\epsilon (S)}
\theta _S^{(2k+1)}.
\end{align*}
Then we get (\ref{eq:lin0}) from this formula by resummation, observing that
$a_{k_j(m)}^{(2k)}(S)^*$ (and hence $\widetilde{a}_j(S)$) depends only on the genus of $S$. 
This completes the proof of Theorem \ref{thm:main}. 
\end{proof}

\section*{Acknowledgment}
This work was supported by JSPS KAKENHI Grant Number 22K03259.


\section*{Conflict of interest statement}
On behalf of all authors, the corresponding author states that there is no conflict of interest.

\section*{Data availability} 
During the work on this publication, no data sets were generated, used
or analyzed. Thus, there is no need for a link to a data repository.

\providecommand{\bysame}{\leavevmode\hbox to3em{\hrulefill}\thinspace}
\providecommand{\MR}{\relax\ifhmode\unskip\space\fi MR }
\providecommand{\MRhref}[2]{%
  \href{http://www.ams.org/mathscinet-getitem?mr=#1}{#2}
}
\providecommand{\href}[2]{#2}

\begin{flushleft}
Siegfried B\"ocherer\\
Kunzenhof 4B \\
79177 Freiburg, Germany \\
Email: boecherer@t-online.de
\end{flushleft}

\begin{flushleft}
  Toshiyuki Kikuta\\
  Faculty of Information Engineering\\
  Department of Information and Systems Engineering\\
  Fukuoka Institute of Technology\\
  3-30-1 Wajiro-higashi, Higashi-ku, Fukuoka 811-0295, Japan\\
  E-mail: kikuta@fit.ac.jp
\end{flushleft}

\end{document}